\documentclass[12pt]{article}
\usepackage{amsmath,amsthm}
\usepackage{amssymb}
\usepackage{amsfonts}

\newtheorem{thm}{Theorem}[section]
\newtheorem{pro}[thm]{Proposition}
\newtheorem{cor}[thm]{Corollary}

\theoremstyle{definition}
\newtheorem{dfn}[thm]{Definition}
\newtheorem{fct}[thm]{Fact}

\newtheorem{rmk}[thm]{Remark}

\newtheorem{que}[thm]{Question}

\parskip=0mm
\oddsidemargin=0cm\evensidemargin=0cm\textwidth=16cm\textheight
=22cm\headheight=0cm\topskip=0cm\topmargin=0cm

\begin{document}
\def\dis{\displaystyle}


\def\dotminus{\mathbin{\ooalign{\hss\raise1ex\hbox{.}\hss\cr
  \mathsurround=0pt$-$}}}

\begin{center}
{\LARGE{  Continuous logic and the strict order property
 }} \vspace{10mm}

{
{\bf Karim Khanaki }} \vspace{3mm}

{\footnotesize
   Department of science,\\
  Arak University of Technology,\\
P.O. Box 38135-1177, Arak, Iran; \\
 e-mail: khanaki@arakut.ac.ir

}\vspace{5mm}
\end{center}

{\sc Abstract.}
{\small  We generalize a theory of Shelah for continuous logic,
namely a continuous theory has OP if and only if it has IP or SOP.

\medskip

{\small{\sc Keywords}:  strict order property,  continuous logic.
}

AMS subject classification: 03C45, 46E15, 46A50.




\section{SOP in Continuous Logic}
We assume that the reader is familiar with continuous logic from
\cite{BBHU} and \cite{BU}. We introduce a notion of `strict order
property' for continuous logic as a complimentary to NIP: a
theory has OP iff it has IP or SOP. We note that the usual
translation of SOP in classical logic to continuous logic is not
the `suitable' notion, because it seems that Shelah's theorem
does not hold with this translation. So, we need to provide a
different definition.

\begin{dfn}
(i) We say a formula $\phi(x,y)$ in continuous logic has the {\em
  strict order property} (SOP) if there exists a sequence
$(a_ib_i:i<\omega)$ in the monster model $\mathcal U$ and
$\epsilon>0$ such that for all $i<j$,
$$\phi({\mathcal U},a_i)\leqslant\phi({\mathcal U},a_{i+1})\ \ \ \mbox{ and } \ \ \phi(b_j,a_i)+\epsilon<\phi(b_i,a_j).$$
We say that the theory $T$ has SOP if a formula $\phi(x,y)$ has
SOP.

 (ii) We say a theory $T$ has the {\em
  weak strict order property} (wSOP) if there are a formula
  $\phi(x,y)$ and $\epsilon>0$ such that for each natural number $n$ there are a formula $\psi_n(x,y)$ (of combination of instances $\phi(x,a)$) and an indiscernible sequence
$(a_i)_{i<\omega}$ and arbitrary sequence $(b_i)_{i<\omega}$ such
that for all $b\in{\mathcal U}$, the sequence $\psi_n(b,a_i)$ has
an eventual value and for all $i<j$,
$$\psi_n({\mathcal U},a_i)\dotminus \psi_n({\mathcal U},a_{i+1})\leqslant\frac{\epsilon}{n}\ \ \ \mbox{ and } \ \ \psi_n(b_j,a_i)+\epsilon<\psi_n(b_i,a_j).$$
In this case, we say the formula $\phi(x,y)$ {\em
 makes the weak strict order property} (or makes wSOP).

 (iii) We say a formula $\phi(x,y)$  has \textbf{not} the {\em
  weak sequential completeness property} (NSCP) if there exists an indiscernible sequence
$(a_i)_{i<\omega}$, an arbitrary sequence $(b_i)_{i<\omega}$ and
$\epsilon>0$ such that for all $b\in{\mathcal U}$, the sequence
$\phi(b,a_i)$ has an eventual value and for all $i<j$,
$\phi(b_j,a_i)+\epsilon<\phi(b_i,a_j)$. We say that a theory $T$
has NSCP if a formula $\phi(x,y)$ has NSCP.

The acronym SOP (wSOP) stands for the (weak) strict order property
and NSOP (NwSOP) is its negation. The acronym SCP stands for the
negation of NSCP.
 \end{dfn}

\begin{que}
Is wSOP (or NSCP) an `expressible' property? In the above
definition, the notion `combination of instances $\phi(x,a)$' is
not expressible.
\end{que}


\begin{rmk} (i) Clearly SOP implies wSOP. (Indeed, let $\psi_n=\phi$ for all $n$.)
 Also, wSOP implies NSCP. (Indeed, let $\phi=\psi_1$.) We will shortly show that SCP and NwSOP are the same. Of course, in classical ($\{0,1\}$-valued) logic, NSCP, wSOP and  SOP are the same.

(ii) We will see shortly that OP implies IP or wSOP, but we could
not prove that OP implies IP or SOP. The reason for this is that
the usual argument of the proof of Shelah's theorem does no hold
for non-discrete-valued logics. So we believe that the correct
notion of strict order property for continuous logic is wSOP.

 (iii) We note that every formula of the form
$\psi(y_1,y_2)=\sup_x(\phi(x,y_1) \dotminus \phi(x,y_2))$ defines
a continuous pre-ordering (see Question~4.14 of \cite{Ben2} for
the definition), in analogy with formulae of the form
$\psi(y_1,y_2)=\forall x(\phi(x,y_1)\to\phi(x,y_2))$ in classical
logic. It is easy to see that for a theory $T$ (in continuous
logic), some formula has SOP if and only if
 there is a formula in $T$ defining a pre-order (in the sense of
 \cite{Ben2})
with infinite chains.

(iv) In the definitions of NSCP and NwSOP we supposed that the
sequence are eventually constant. The reason for this is that we
want the sequence $\phi(x,a_i):S_\phi(\mathcal U)\to \{0,1\}$
converges. In the definition of SOP, since the sequence
$\phi(\mathcal U,a_i)$ is increasing, this requirement is
guaranteed.

(v) Note that contrary to SOP, the property NSCP is not an
`expressible' property of formulas. In fact this property is from
functional analysis:  a Banach space $X$ is called {\em weakly
sequentially complete} if every weak Cauchy sequence has a weak
limit. Because of the importance of this concept, we reiterate it.
\end{rmk}

\begin{dfn} (i) Let $X$ be a topological space and  $F\subseteq C(X)$. We say that $F$ has
the {\em weak sequential completeness property} (or short SCP) if
the limit of each pointwise convergent sequence $\{f_n\}\subseteq
F$ is continuous.

(ii) We say that a (bounded) family  $F$ of real-valued function
on a set $X$ has the {\em relative sequential compactness in
${\mathbb{R}}^X$} (short RSC) if   every sequence in $F$ has a
pointwise convergent subsequence in ${\mathbb{R}}^X$.
\end{dfn}

 The next result  is
another application of the Eberlein-Grothendieck criterion:

\begin{fct} \label{nip+scp=stable}
Let $X$ be a compact space and $A\subseteq C(X)$ be bounded. Then
$A$ is  relatively weakly compact in $C(X)$ iff it has RSC and
SCP.
\end{fct}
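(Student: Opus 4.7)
The plan is to derive both implications from the Eberlein-\v{S}mulian theorem combined with the Riesz representation of continuous linear functionals on $C(X)$; together these convert relative weak compactness of $A$ into the existence of pointwise convergent subsequences with continuous limits, which is exactly RSC together with SCP.

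For the forward direction, assume $A$ is relatively weakly compact in $C(X)$. By Eberlein-\v{S}mulian, $A$ is relatively weakly sequentially compact, so every sequence $(f_n)\subseteq A$ has a weakly convergent subsequence with limit $f\in C(X)$. Since each point-evaluation $\delta_x:g\mapsto g(x)$ is a continuous linear functional on $C(X)$, weak convergence implies pointwise convergence; this immediately yields RSC, while continuity of the weak limit yields SCP.

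For the converse, assume RSC and SCP, and take any sequence $(f_n)\subseteq A$. By RSC there is a subsequence $(f_{n_k})$ converging pointwise to some $f\in\mathbb{R}^X$, and by SCP we have $f\in C(X)$. The crucial step is to upgrade pointwise convergence to weak convergence: by the Riesz representation theorem every bounded linear functional on $C(X)$ has the form $g\mapsto\int_X g\,d\mu$ for a finite regular Borel measure $\mu$, and uniform boundedness of $A$ together with Lebesgue's dominated convergence theorem gives $\int f_{n_k}\,d\mu\to\int f\,d\mu$. Hence $f_{n_k}\to f$ weakly, so $A$ is relatively weakly sequentially compact, and one more application of Eberlein-\v{S}mulian delivers relative weak compactness.

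The principal technical step is the pointwise-to-weak upgrade in the converse, for which both the uniform boundedness of $A$ and the compactness of $X$ (entering through Riesz) are essential; everything else reduces to direct invocations of standard Banach space facts, so no obstacle beyond correctly organizing these ingredients is anticipated.
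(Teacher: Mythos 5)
Your proof is correct, but note that the paper does not actually prove this fact at all: it simply cites Theorem~4.3 of \cite{K3} and frames the statement as ``another application of the Eberlein--Grothendieck criterion,'' i.e.\ Grothendieck's double-limit characterization of relatively weakly compact bounded subsets of $C(X)$. Your route is genuinely different. You reduce relative weak compactness to relative weak \emph{sequential} compactness via Eberlein--\v{S}mulian and then, for the converse, upgrade bounded pointwise convergence to a continuous limit into weak convergence by the Riesz representation theorem together with dominated convergence (applied to the Jordan decomposition of each representing signed measure). This is a standard, self-contained argument relying only on classical named theorems; what the Grothendieck-criterion route buys instead is uniformity with the rest of the paper, whose model-theoretic arguments (e.g.\ the proof of Corollary~\ref{Shelah-continuous}) are phrased precisely in terms of iterated double limits $\lim_m\lim_n$ versus $\lim_n\lim_m$. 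One presentational point worth tightening: in the forward direction, SCP concerns an \emph{arbitrary} pointwise convergent sequence $(f_n)\subseteq A$, not the subsequence that Eberlein--\v{S}mulian hands you; you should extract a weakly convergent subsequence $f_{n_k}\to g\in C(X)$, observe that weak convergence forces $g$ to agree with the given pointwise limit $f$ at every point, and conclude $f=g$ is continuous. Your phrasing compresses this identification of limits, but the gap is purely expository.
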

\begin{proof}
See Theorem 4.3 in \cite{K3}.
\end{proof}

\begin{pro} \label{SCP->NSOP}
If the set $\{\phi(x,a):a\in\mathcal{U}\}$ has the SCP, then
$\phi(x,y)$ is NSOP.
\end{pro}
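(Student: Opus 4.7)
The plan is to prove the contrapositive: assume $\phi$ has SOP, witnessed by $(a_ib_i:i<\omega)$ and $\epsilon>0$, and show that $F:=\{\phi(\cdot,a):a\in\mathcal{U}\}$ fails SCP. Set $f_i:=\phi(\cdot,a_i)$, viewed as continuous functions on the compact type space $X:=S_x(\mathcal{U})$. The first clause of SOP gives $f_i\leqslant f_{i+1}$ pointwise on $\mathcal{U}$; since every type in $X$ is realised in some elementary extension of $\mathcal{U}$, this inequality lifts to all of $X$. As continuous-logic formulas take values in a bounded interval, the monotone bounded sequence $(f_i)$ converges pointwise on $X$ to a bounded function $f$.

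Assume now, toward a contradiction, that $F$ has SCP. Then $f\in C(X)$, and Dini's theorem (monotone pointwise convergence of continuous functions to a continuous limit on a compact space) upgrades the convergence to uniform. Choose $N$ so large that $\|f_n-f\|_\infty<\epsilon/3$ for every $n\geqslant N$.

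For $N\leqslant i<j$, the second SOP clause $f_i(b_j)+\epsilon<f_j(b_i)$ combined with the uniform estimate yields
$$
f(b_j)+\tfrac{2\epsilon}{3}\;<\;f_i(b_j)+\epsilon\;<\;f_j(b_i)\;<\;f(b_i)+\tfrac{\epsilon}{3},
$$
so $f(b_i)-f(b_j)>\epsilon/3$. Thus $(f(b_n))_{n\geqslant N}$ is a real sequence that strictly decreases by at least $\epsilon/3$ at each step, contradicting boundedness of $f$.

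The main obstacle is the passage from pointwise to uniform convergence; this is precisely where the SCP hypothesis enters, through continuity of the limit, making Dini's theorem applicable on the compact space $X$. Once uniform convergence is secured, the $\epsilon$-gap in the SOP pattern converts directly into an infinite monotone descent of reals in a bounded range, which is impossible. Notably, no Ramsey or indiscernibility extraction is needed, which also clarifies why the continuous analogue of Shelah's theorem naturally produces only the weaker notion wSOP in the converse direction discussed in Remark~1.3(ii).
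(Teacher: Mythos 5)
Your proof is correct, but it takes a genuinely different route from the paper's. The paper argues by contradiction via a cluster point $b$ of $\{b_i\}$ in the compact type space together with an interchange of iterated limits: writing $\psi$ for the (continuous, by SCP) pointwise limit of the increasing sequence $\phi(\cdot,a_i)$, the inequality $\phi(b_j,a_i)+\epsilon<\phi(b_i,a_j)$ gives $\psi(b)+\epsilon\leqslant\psi(b)$ after letting $j\to\infty$ and then $i\to\infty$ along a subnet converging to $b$. You instead use Dini's theorem to upgrade the monotone pointwise convergence to uniform convergence on the compact type space, and then turn the $\epsilon$-gap into an impossible infinite descent among the bounded reals $f(b_i)$. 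Your version is arguably tighter on one point: the paper's inner limit $\lim_j\phi(b_j,a_i)$ need not exist for the full sequence and really requires passing to a subnet through the cluster point, a detail the paper glosses over, whereas you avoid cluster points and subnets entirely. The trade-off is that the paper's double-limit argument uses only pointwise convergence plus continuity of the limit --- monotonicity enters only to guarantee that the pointwise limit exists --- so it is the same template that recurs in the later NSCP and wSOP results (Proposition~1.10 and Corollary~1.11), while your appeal to Dini is tied to the monotonicity built into the definition of SOP and would not transfer to those non-monotone settings. Both arguments are valid proofs of the proposition.
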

\begin{proof} Suppose, for a contradiction, that  $\{\phi(x,a):a\in\mathcal{U}\}$ has the SCP
 and $\phi$ is SOP. By SOP, there are $(a_ib_i:i<\omega)$ in the monster model $\mathcal U$ and
$\epsilon>0$ such that $\phi({\mathcal
U},a_i)\leqslant\phi({\mathcal U},a_{i+1})$ and
$\phi(b_j,a_i)+\epsilon<\phi(b_i,a_j)$ for all $i<j$. Let $b$ be
a cluster point of $\{b_i\}_{i<\omega}$. By SCP,
$\phi(S_\phi({\mathcal U}),a_i)\nearrow\psi$ and $\psi$ is
continuous. But
$\lim_i\lim_j\phi(b_j,a_i)+\epsilon\leqslant\lim_i\lim_j\phi(b_i,a_j)$
and by continuity $\psi(b)+\epsilon\leqslant\psi(b)$, a
contradiction.
\end{proof}

\begin{cor}
 Suppose that $T$ is NIP and SCP. Then $T$ is stable.
\end{cor}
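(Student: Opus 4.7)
The plan is to chain together two implications already stated in the remarks above, namely $\mathrm{wSOP}\Rightarrow\mathrm{NSCP}$ from Remark~(i) and $\mathrm{OP}\Rightarrow\mathrm{IP}\vee\mathrm{wSOP}$ announced in Remark~(ii), reducing the corollary to the familiar slogan $\mathrm{stable}=\mathrm{NIP}+\mathrm{NOP}$. That last equivalence holds in continuous logic via the Eberlein--Grothendieck criterion that underlies Fact~\ref{nip+scp=stable}.

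First I would unpack the hypothesis that $T$ is SCP: this means no formula $\phi(x,y)$ of $T$ has NSCP, equivalently that for every $\phi$ the family $\{\phi(x,a):a\in\mathcal{U}\}$ has the SCP property. Taking the contrapositive of $\mathrm{wSOP}\Rightarrow\mathrm{NSCP}$ from Remark~(i), no formula of $T$ makes wSOP, so $T$ is NwSOP. Together with the NIP hypothesis, $T$ is therefore both NIP and NwSOP.

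Next I would invoke the implication $\mathrm{OP}\Rightarrow\mathrm{IP}\vee\mathrm{wSOP}$ of Remark~(ii). Its contrapositive reads $\mathrm{NIP}\wedge\mathrm{NwSOP}\Rightarrow\mathrm{NOP}$, so $T$ is NOP, which combined with NIP is exactly the desired stability of $T$.

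The main obstacle is not local to the corollary at all: the entire argument hinges on the implication $\mathrm{OP}\Rightarrow\mathrm{IP}\vee\mathrm{wSOP}$, which at this point in the paper has only been announced and not yet proved. I would also emphasize that one cannot take a shortcut through Proposition~\ref{SCP->NSOP} alone: in continuous logic the conjunction $\mathrm{NIP}+\mathrm{NSOP}$ is in general strictly weaker than stability---this mismatch is precisely the motivation for introducing wSOP---so one really must upgrade NSOP to NwSOP, and routing through $\mathrm{SCP}\Rightarrow\mathrm{NwSOP}$ via Remark~(i) is what delivers that upgrade.
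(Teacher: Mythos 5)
There is a genuine gap here, and you have in fact half-named it yourself: your argument routes through the implication $\mathrm{OP}\Rightarrow\mathrm{IP}\vee\mathrm{wSOP}$, which at this point in the paper is only announced in the Remark, and whose eventual proof is not independent of the corollary you are trying to establish. Concretely, your chain is $\mathrm{SCP}\Rightarrow\mathrm{NwSOP}$ (the easy, definitional direction of Proposition~\ref{SCP=NwSOP}) followed by $\mathrm{NIP}\wedge\mathrm{NwSOP}\Rightarrow\mathrm{stable}$ (Corollary~\ref{Shelah-continuous}). But look at how the paper proves Corollary~\ref{Shelah-continuous}: at the decisive step it says ``By NwSOP (or equivalently SCP), $\lim_n\psi(b_n)=\psi(b)$'', i.e.\ it converts NwSOP back into SCP and then runs exactly the double-limit argument that constitutes the present corollary. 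Unwinding your proof therefore gives $\mathrm{SCP}\Rightarrow\mathrm{NwSOP}\Rightarrow\mathrm{SCP}\Rightarrow\mathrm{stable}$ (given NIP), where the last arrow is the statement to be proved; the detour through wSOP buys nothing and, as written, is circular. (Your side remark that $\mathrm{NIP}+\mathrm{NSOP}$ is \emph{strictly} weaker than stability in continuous logic also overstates what the paper claims, which is only that it could not prove $\mathrm{OP}\Rightarrow\mathrm{IP}\vee\mathrm{SOP}$; but you are right that Proposition~\ref{SCP->NSOP} alone does not suffice here.)

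The intended proof is direct and purely functional-analytic, with no mention of wSOP at all. Fix a formula $\phi(x,y)$ and regard $A=\{\phi(x,a):a\in\mathcal U\}$ as a bounded subset of $C(S_\phi(\mathcal U))$. NIP says exactly that $A$ has RSC (every sequence in $A$ has a pointwise convergent subsequence), and the SCP hypothesis says that the pointwise limits of such sequences are continuous. Fact~\ref{nip+scp=stable} (the Eberlein--Grothendieck criterion) then gives that $A$ is relatively weakly compact in $C(S_\phi(\mathcal U))$, and by the Eberlein--\v{S}mulian theorem together with Grothendieck's double-limit criterion this is equivalent to $\phi$ not having the order property. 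As $\phi$ was arbitrary, $T$ is stable. If you prefer an argument in model-theoretic language, the double-limit computation in the paper's proof of Corollary~\ref{Shelah-continuous} is precisely this argument written out, with SCP quoted at the single point where continuity of the limit function is needed; that computation can be performed here directly from the SCP hypothesis without ever introducing wSOP.
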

\begin{proof}
 Use  the Eberlein--\v{S}mulian theorem. (See also \ref{nip+scp=stable}  above.)
\end{proof}

\begin{fct}
Suppose that $T$ is a theory. Then the following are equivalent:
\begin{itemize}
             \item [{\em (i)}] $T$ is NSOP.
             \item [{\em (ii)}] For each indiscernible sequence $(a_n)_{n<\omega}$ and
formula $\phi(x,y)$, if the sequence $(\phi(x,a_n))_{n<\omega}$ is
\textbf{increasing}  on $S_\phi(\mathcal{U})$, then its limit is
continuous.
\end{itemize}
\end{fct}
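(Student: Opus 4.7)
The plan is to prove each direction by contrapositive. The key technical fact used throughout is that an increasing pointwise limit $\psi=\sup_n\phi(x,a_n)$ of continuous functions on the compact space $S_\phi(\mathcal{U})$ is automatically lower semi-continuous, so failure of continuity for $\psi$ is exactly failure of upper semi-continuity; this one-sidedness is what allows passing between SOP chains and discontinuities.

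For (ii)$\Rightarrow$(i), I would argue contrapositively: assume $\phi(x,y)$ has SOP, witnessed by $(a_ib_i:i<\omega)$ and $\epsilon>0$. A standard Ramsey and compactness extraction produces an indiscernible sequence whose EM-type preserves the SOP inequalities (the strict gap $+\epsilon$ survives as a weak inequality with gap $\epsilon/2$), so one may assume $(a_ib_i)_{i<\omega}$ is itself indiscernible. Then $\phi(x,a_i)$ is increasing on $S_\phi(\mathcal{U})$, and by (ii) the limit $\psi$ is continuous. Indiscernibility forces $\phi(b_j,a_i)=\alpha$ for all $i<j$ and $\phi(b_i,a_j)=\beta$ for all $i<j$, with $\alpha+\epsilon\leq\beta$. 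Letting $b$ be a cluster point in $S_\phi(\mathcal{U})$ of $(b_i)$, one computes $\psi(b)=\alpha$ while $\psi(b_i)=\beta$ for every $i$, contradicting continuity of $\psi$ at $b$ exactly as in the proof of Proposition~\ref{SCP->NSOP}.

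For (i)$\Rightarrow$(ii), again contrapositively: suppose there are an indiscernible $(a_n)_{n<\omega}$ and a formula $\phi(x,y)$ with $\phi(x,a_n)$ increasing on $S_\phi(\mathcal{U})$ and limit $\psi$ not continuous. By the semi-continuity observation, there exist $p_0\in S_\phi(\mathcal{U})$ and $\epsilon>0$ such that every neighborhood of $p_0$ contains some $q$ with $\psi(q)\geq\psi(p_0)+\epsilon$. For each $n$, using continuity of $\phi(x,a_n)$ together with $\phi(p_0,a_n)\leq\psi(p_0)$, pick a neighborhood $V_n$ of $p_0$ on which $\phi(x,a_n)\leq\psi(p_0)+\epsilon/4$, then choose $q_n\in V_n$ with $\psi(q_n)\geq\psi(p_0)+\epsilon$, and select an index $m_n>n$ with $\phi(q_n,a_{m_n})\geq\psi(p_0)+\epsilon/2$. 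Thinning to a fast-growing subsequence $s_0<m_{s_0}<s_1<m_{s_1}<\cdots$, set $a_i'=a_{s_i}$ (still indiscernible, with $\phi(\mathcal{U},a_i')\leq\phi(\mathcal{U},a_{i+1}')$ by monotonicity in $n$) and let $b_i\in\mathcal{U}$ realize the type $q_{s_i}$. For $i<j$, monotonicity of $\phi(x,a_n)$ in $n$ yields $\phi(b_j,a_i')\leq\phi(q_{s_j},a_{s_j})\leq\psi(p_0)+\epsilon/4$ and $\phi(b_i,a_j')\geq\phi(q_{s_i},a_{m_{s_i}})\geq\psi(p_0)+\epsilon/2$; this is an SOP witness with gap $\epsilon/4$.

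The main obstacle I expect lies in the second direction: SOP demands a two-sided separation between $\phi(b_j,a_i)$ and $\phi(b_i,a_j)$, whereas a discontinuity of $\psi$ initially supplies only a one-sided jump in the values of $\psi$ itself. The construction above resolves this by simultaneously exploiting the pointwise upper bound $\phi(x,a_n)\leq\psi(p_0)+\epsilon/4$ on $V_n$ (which controls $\phi(b_j,a_i')$ from above through monotonicity in $n$) and a later index $m_n$ at which $\phi(q_n,a_{m_n})$ has already climbed close to $\psi(q_n)$ (which pushes $\phi(b_i,a_j')$ up). Once the fast-growing choice of indices is in place, verifying that $(a_i')$ remains indiscernible and that the SOP inequalities transfer cleanly to realizations $b_i$ in $\mathcal{U}$ is routine.
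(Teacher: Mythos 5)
Your proof is correct, and it supplies substantially more than the paper does: the paper's own proof of this Fact is the single line ``Immediate by definition,'' so there is no detailed argument to compare against. Your (ii)$\Rightarrow$(i) direction --- extract an indiscernible sequence of pairs preserving the (now weak) $\epsilon$-gap, note that indiscernibility makes $\phi(b_j,a_i)$ and $\phi(b_i,a_j)$ constant for $i<j$, and contradict continuity of the limit at a cluster point of the $b_i$ --- is essentially the cluster-point argument the paper itself uses in Proposition~\ref{SCP->NSOP}, adapted by first passing to an indiscernible sequence. The (i)$\Rightarrow$(ii) direction is the part that is genuinely not ``immediate,'' and your construction is the right one: since the increasing limit $\psi$ is automatically lower semicontinuous, discontinuity is exactly failure of upper semicontinuity at some $p_0$, and the diagonal choice of neighborhoods $V_n$, points $q_n$, and lagged indices $m_n$, interleaved as $s_0<m_{s_0}<s_1<m_{s_1}<\cdots$, correctly manufactures the two-sided separation that SOP demands, with both inequalities reduced to monotonicity of $\phi(x,a_n)$ in $n$. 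Two small housekeeping points: the gap between your bounds $\psi(p_0)+\epsilon/4$ and $\psi(p_0)+\epsilon/2$ is exactly $\epsilon/4$, so the strict inequality in the SOP definition should be claimed with some margin strictly below $\epsilon/4$ (say $\epsilon/8$); and the $b_i$ must be taken in $\mathcal U$ realizing only the restriction of $q_{s_i}$ to the countably many parameters $a_n$ appearing in the SOP conditions, which saturation of the monster provides.
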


\begin{proof} Immadiate by definition.
\end{proof}

\subsection{Shelah's theorem for continuous logic}
Now we want to give a proof of Shelah's theorem for continuous
logic. First we show that SCP and NwSOP are the same.  For this,
we need some definitions. Let $M$ be a saturated enough structure
and $\phi:M\times M\to{\Bbb R}$ a formula. For subsets
$B,D\subseteq M$, we say that $\phi(x,y)$ has the {\em order
property on } $B\times D$ (short OP on $B\times D$) if there are
$\epsilon>0$ and sequences $(a_i)\subseteq B$, $(b_i)\subseteq D$
such that $|\phi(a_i,b_j)-\phi(a_j,b_i)|\geqslant \epsilon$ for
all $i<j<\omega$. We will say that $\phi(x,y)$ has the {\em NIP on
$B\times D$}, if  for the set $A=\{\phi(a,y):S_y(D)\to{\Bbb
R}~|a\in B\}$, any of the cases in Lemma~3.12 in \cite{K3} holds.

\begin{pro} \label{SCP=NwSOP}
Suppose that $T$ is a theory. Then the following are equivalent:
\begin{itemize}
             \item [{\em (i)}] $T$ is NwSOP.
             \item [{\em (ii)}]  $T$ is SCP.
\end{itemize}
\end{pro}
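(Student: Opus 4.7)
The plan is to establish the two implications separately. For (ii) $\Rightarrow$ (i), the implication should follow almost at once from Remark 1.3(i). If $T$ has wSOP with witnessing formula $\phi$ and auxiliary formulas $(\psi_n)$, I will take $n=1$: the resulting formula $\psi_1$ (itself a formula built from instances of $\phi$) comes equipped with an indiscernible $(a_i)$, an arbitrary $(b_i)$, eventual values for $\psi_1(b,a_i)$, and the strict inequality $\psi_1(b_j,a_i)+\epsilon<\psi_1(b_i,a_j)$. This is exactly NSCP for $\psi_1$, so some formula of $T$ fails SCP, i.e.\ $T$ is not SCP.

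For the harder direction (i) $\Rightarrow$ (ii), I will argue by contrapositive: assuming some formula $\phi$ fails SCP, I will produce a wSOP witness. First, I will pick a pointwise convergent sequence $\phi(\cdot,a_i)\to\psi$ on $S_\phi(\mathcal{U})$ with $\psi$ discontinuous, and use the standard continuous-logic Erd\H{o}s--Rado / Ramsey extraction to pass to an indiscernible subsequence retaining pointwise convergence. The discontinuity of $\psi$ at some type $p$ supplies $\epsilon>0$ and types $p_k\to p$ with $|\psi(p_k)-\psi(p)|\geqslant\epsilon$; realizing the $p_k$ by $b_k\in\mathcal{U}$ and running the same Grothendieck double-limit / Ramsey argument used in the proof of Proposition \ref{SCP->NSOP}, I will extract sequences with $\phi(b_j,a_i)+\epsilon<\phi(b_i,a_j)$ for all $i<j$ and with $\phi(b,a_i)$ having an eventual value for every $b$; this already gives NSCP for $\phi$.

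The crux is the final step: upgrading the NSCP data to wSOP by building, for each $n$, a formula $\psi_n$ (combination of instances of $\phi$) together with indiscernible $(a^n_i)$ and arbitrary $(b^n_i)$ satisfying both the almost-monotonicity $\psi_n(\cdot,a^n_i)\dotminus\psi_n(\cdot,a^n_{i+1})\leqslant\epsilon/n$ and the strict inequality. The plan is to exploit indiscernibility to reduce the oscillation: by indiscernibility the quantity $\sup_x(\phi(x,a_i)-\phi(x,a_{i+1}))$ is a single constant $\beta$, and if $\beta=0$ one gets outright SOP. When $\beta>0$, I will construct $\psi_n$ using a continuous combination (for instance, truncating $\phi(x,y)$ from above by a continuous approximation of $\psi$, built from finitely many fixed instances $\phi(x,c_1),\ldots,\phi(x,c_k)$ together with scalar shifts of order $\epsilon/n$) so that the resulting combination damps the oscillation of $\phi(\cdot,a^n_i)$ to within $\epsilon/n$ while still separating $(b_j,a_i)$ from $(b_i,a_j)$ by $\epsilon$ on a suitably extracted sub-pattern.

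The main obstacle, as I see it, is precisely this construction: arranging that the almost-monotonicity condition (uniform in $x$) and the strict inequality on the $(b^n_i)$ persist under the same modification and that the result is still a legitimate ``combination of instances'' of $\phi$. The delicate point is that the failure of SCP is fundamentally a non-uniform phenomenon (pointwise but not uniform convergence), so any damping of oscillation must be done through the parameters, not through the variable $x$, and must be compatible with indiscernibility of the new parameter sequence; verifying this compatibility (likely via an additional Ramsey/compactness extraction) is what the technical work of the proof will amount to.
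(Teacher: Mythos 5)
Your easy direction (ii) $\Rightarrow$ (i) matches the paper: wSOP with $n=1$ gives NSCP for some formula, so SCP implies NwSOP. Your setup for (i) $\Rightarrow$ (ii) (contrapositive; a discontinuous pointwise limit along an indiscernible sequence yields NSCP data for $\phi$) also agrees with the paper. But the crux you yourself identify --- upgrading the NSCP data to wSOP by actually producing the formulas $\psi_n$ --- is exactly where your proposal stops being a proof. The ``damping'' mechanism you sketch (truncating $\phi(x,y)$ from above by a continuous approximation of the limit built from finitely many fixed instances) is both absent from the paper and unlikely to work: truncating from above by a fixed function $g$ does nothing to control the oscillation of $\phi(\cdot,a_i)$ on the region where $\phi$ already lies below $g$, yet the required bound $\psi_n(\mathcal{U},a_i)\dotminus\psi_n(\mathcal{U},a_{i+1})\leqslant\epsilon/n$ must hold uniformly in $x$; moreover the limit is discontinuous, so it is unclear what a ``continuous approximation built from finitely many instances'' would be.

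The paper's mechanism is combinatorial, namely Shelah's alternation argument. Since $(\phi(x,a_n))$ converges pointwise it has RSC, so $\tilde\phi(y,x)$ is NIP on $\{a_n\}\times S_\phi(\mathcal{U})$; combined with the OP coming from discontinuity of the limit, for every $\epsilon\in(0,s-r)$ there is a finite inconsistent pattern $\bigwedge_{i<n}\tilde\phi(a_i,x)^{\eta(i)}$, where $\varphi^1$ means $\varphi\leq r+\epsilon/2$ and $\varphi^0$ means $\varphi\geq s-\epsilon/2$. Swapping adjacent entries one at a time locates a critical index $i_0$ and a context $\varphi(\bar a,x)=\bigwedge_{i\neq i_0,i_0+1}\tilde\phi(a_i,x)^{\eta_0(i)}$ for which one order of the pair at $(i_0,i_0+1)$ is consistent and the reversed order is inconsistent. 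Setting $\psi_n(x,y)=\varphi(\bar a,x)\wedge\tilde\phi(y,x)$ and re-indexing along a $\mathbb{Q}$-indexed indiscernible extension of $(a_i)$ restricted to the interval $(i_0,i_0+1)$ then gives the almost-monotonicity from the inconsistency and the $\epsilon$-separation from the consistency; letting $\epsilon$ vary produces all the $\psi_n$. It is this restriction to a definable context together with the alternation bound --- not an analytic damping of $\phi$ through its parameters --- that supplies the missing step in your argument, so as written your proposal has a genuine gap at the decisive point.
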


\begin{proof} (ii)~$\Rightarrow$~(i) is by definition. For
(i)~$\Rightarrow$~(ii) we repeat the argument of Shelah's theorem
(see Proposition~4.6 of \cite{K3}).

Indeed,  suppose that $T$ is NOT SCP; this means that there are an
indiscernible sequence $(a_n)_{n<\omega}$ and a formula
$\phi(x,y)$ such that the sequence $(\phi(x,a_n))_{n<\omega}$
pointwise converges but its limit is not continuous. Since the
limit is not continuous, $\tilde{\phi}(y,x)=\phi(x,y)$ has OP on
$\{a_n\}_{n<\omega}\times S_\phi({\mathcal U})$. Since every
sequence in $\{\phi(x,a_n)\}_{n<\omega}$ has a pointwise
convergent subsequence, $\tilde{\phi}(y,x)$ is NIP on
$\{a_n\}_{n<\omega}\times S_\phi({\mathcal U})$.  The following
argument is classic (see   \cite{Poi} and \cite{S}).  Since
$\tilde{\phi}(y,x)$ has OP, there  are $r<s$ and a sequence
$\{b_N\}\subseteq S_\phi({\mathcal U})$ such that
$\tilde\phi(a_i,b_N)\leq r$ holds if $i<N$, and
$\tilde\phi(a_i,b_N)\geq s$ in the otherwise.
 By NIP, for each $r<s$ and $\epsilon\in(0,s-r)$,
there is some integer $n$ and $\eta : n \rightarrow \{0,1\}$ such
that $\bigwedge_{i<n} \tilde\phi(a_i,x)^{\eta(i)}$ is
inconsistent, where  for a formula $\varphi$, we use the notation
$\varphi^1$ to mean $\varphi\leq r+\frac{\epsilon}{2}$ and
$\varphi^0$ to mean $\varphi\geq s-\frac{\epsilon}{2}$. (Recall
that unlike classical model theory, in continuous logic Trus is 0
and False is 1.) Starting with that formula, we change one by one
instances of $\tilde\phi(a_i,x)\geq s-\frac{\epsilon}{2} \wedge
\tilde\phi(a_{i+1},x)\leq r+\frac{\epsilon}{2}$ to
$\tilde\phi(a_i,x)\leq r+\frac{\epsilon}{2} \wedge
\neg\tilde\phi(a_{i+1},x)\geq s-\frac{\epsilon}{2}$. Finally, we
arrive at a formula of the form $\bigwedge_{i<N}
\tilde\phi(a_i,x)\leq r+\frac{\epsilon}{2} \wedge
\bigwedge_{N\leq i<n}
 \tilde\phi(a_i,x)\geq s-\frac{\epsilon}{2}$. The tuple $b_N$ satisfies that formula.
 Therefore, for such $r<s$ and $\epsilon$, there is some
$i_0<n$, $\eta_0 : n \rightarrow \{0,1\}$ such that
$$\bigwedge_{i\neq i_0, i_0+1} \tilde\phi(a_i,x)^{\eta_0(i)} \wedge \tilde\phi(a_{i_0},x) \geq
s-\frac{\epsilon}{2}\wedge \tilde\phi(a_{i_0+1},x)\leq
r+\frac{\epsilon}{2}$$ is inconsistent, but
$$\bigwedge_{i\neq i_0, i_0+1} \tilde\phi(a_i,x)^{\eta_0(i)} \wedge \tilde\phi(a_{i_0},x)\leq r+\frac{\epsilon}{2} \wedge \tilde\phi(a_{i_0+1},x)\geq
s-\frac{\epsilon}{2}$$ is consistent. Let us define $\varphi(\bar
a,x)=\bigwedge_{i\neq i_0,i_0+1} \tilde\phi(a_i,x)^{\eta_0(i)}$.
Increase the sequence $(a_i : i<\omega)$ to an indiscernible
sequence $(a_i:i\in \mathbb Q)$. Then for $i_0 \leq i<i' \leq
i_0+1$, the formula $\varphi(\bar a,x)\wedge \tilde\phi(a_i,x)\leq
r+\frac{\epsilon}{2}  \wedge \tilde\phi(a_{i'},x)\geq
s-\frac{\epsilon}{2}$ is consistent, but $\varphi(\bar a,x) \wedge
\tilde\phi(a_i,x)\geq s-\frac{\epsilon}{2} \wedge
\tilde\phi(a_{i'},x)\leq r+\frac{\epsilon}{2}$ is inconsistent.
Thus the formula $\psi(x,y) = \varphi(\bar a,x) \wedge
\tilde\phi(y,x)$ is the formula $\psi_n$ (for some $n$) in the
definition of wSOP above. Note that for all $b$, the sequence
$\psi(b,a_i)$ has eventual true value; equivalently it converges.
(Indeed, since the sequence $(\tilde{\phi}(a_i,x):i<\omega)$
converges and we increased the sequence $(a_i : i<\omega)$ to the
indiscernible sequence $(a_i:i\in \mathbb Q)$, it is easy to
verify that every sequence
$(\tilde{\phi}(a_{j_i},x):i_0<j_i<j_{i+1}<i_0+1,~ i<\omega)$
converse. Assume not, and for some $b$ the sequence
$\tilde{\phi}(a_{j_i},b)$ diverges. Take the strictly increasing
function $\tau:\omega\to\omega$ by $\tau(j_i)=i$. By
indiscernibility, the set of conditions
$\{\tilde{\phi}(a_i,x)=\tilde{\phi}(a_{j_i},b):i<\omega\}$ is
consistence; but this means that for some $b$ the sequence
$\tilde{\phi}(a_i,b)$ diverges, a contradiction.)
As $\epsilon$ is arbitrary, the proof is completed.
\end{proof}

The next result  is a generalization of Shelah's theorem
(\cite{Sh}, Theorem~4.1) for continuous logic.

\begin{cor}[Shelah's theorem for continuous logic] \label{Shelah-continuous}  Suppose that $T$ is NIP and NwSOP. Then $T$
is stable.
\end{cor}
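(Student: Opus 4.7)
The plan is to reduce this corollary to two results already established earlier in the section: Proposition~\ref{SCP=NwSOP} (which identifies NwSOP with SCP) and the earlier Corollary that NIP together with SCP forces stability (itself a consequence of Fact~\ref{nip+scp=stable} via the Eberlein--\v{S}mulian theorem). So essentially no new ideas are required; the content of the statement is that assembling these two results gives precisely Shelah's theorem in the continuous setting.

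Concretely, I would proceed in two steps. First, assume $T$ is NwSOP. By the equivalence (i)~$\Leftrightarrow$~(ii) of Proposition~\ref{SCP=NwSOP}, $T$ satisfies SCP, i.e.\ for every formula $\phi(x,y)$ and every indiscernible sequence $(a_n)_{n<\omega}$, the pointwise limit of $(\phi(x,a_n))_{n<\omega}$ on $S_\phi(\mathcal U)$, when it exists, is continuous. Second, I would combine this with the NIP hypothesis: by Fact~\ref{nip+scp=stable} applied to $X = S_\phi(\mathcal U)$ and $A=\{\phi(\cdot,a):a\in\mathcal U\}$, the set $A$ is relatively weakly compact in $C(X)$, since NIP gives RSC (every sequence of instances has a pointwise convergent subsequence, via the double-limit / Rosenthal-style criterion from \cite{K3}) and NwSOP gives SCP. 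Relative weak compactness of the set of instances of $\phi$ for every $\phi$ is, via Eberlein--\v{S}mulian, the characterization of stability of $T$.

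There is really no obstacle: the work has been done in Proposition~\ref{SCP=NwSOP} (which reinterprets NwSOP as an analytic property of families of formulas) and in the functional-analytic Fact~\ref{nip+scp=stable}. The only thing to be careful about is to state the conclusion at the level of every formula $\phi(x,y)$: that is, to observe that NIP and NwSOP are both ``for every formula'' statements, so the deduction of relative weak compactness of $\{\phi(x,a):a\in\mathcal U\}$ in $C(S_\phi(\mathcal U))$ goes through for each $\phi$ separately, which is the standard continuous-logic formulation of stability.
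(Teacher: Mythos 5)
Your proposal is correct and follows essentially the same route as the paper: the paper's proof likewise rests on Proposition~\ref{SCP=NwSOP} to convert NwSOP into SCP and then on the NIP$+$SCP machinery of Fact~\ref{nip+scp=stable} (the paper even ends its proof with ``Compare Fact~\ref{nip+scp=stable}''), the only cosmetic difference being that the paper verifies the Grothendieck double-limit criterion by hand for an indiscernible sequence $(a_m)$ and arbitrary $(b_n)$ instead of citing relative weak compactness and Eberlein--\v{S}mulian directly.
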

\begin{proof}  Let $\phi(x,y)$ be a formula, $(a_n)_{n<\omega}$ an
indiscernible sequence, and $(b_n)_{n<\omega}$ an arbitrary
sequence. Suppose that the double limits
$\lim_m\lim_n\phi(b_n,a_m)$ and $\lim_n\lim_m\phi(b_n,a_m)$ exist.
By NIP, there is a convergent subsequence $\phi(x,a_{m_k})$ such
that $\phi(x,a_{m_k})\to\psi(x)$ on $S_\phi(\mathcal{U})$.
Therefore, $\lim_n\lim_k\phi(b_n,a_{m_k})=\lim_n\psi(b_n)$ and
$\lim_k\lim_n\phi(b_n,a_{m_k})=\lim_k\phi(b,a_{m_k})=\psi(b)$
where $b$ is a cluster point of $\{b_n\}$. By NwSOP (or
equivalently SCP), $\lim_n\psi(b_n)=\psi(b)$. So the double limits
are the same and thus $T$ is stable. (Compare
Fact~\ref{nip+scp=stable}.)
\end{proof}

\subsection{Universal models of Banach lattices} We show that a
formuls in the language of Banach lattices has SOP$_n$ and so for
many of cardinals there is not any universal model. 

In \cite{SU}, Shelah and Usvyatsov proved that the theory $T_B$
of all Banach spaces is quantifier-free-NSOP, i.e. there is not a
\textbf{quantifier-free} formula such that defines a partial
order with infinite chain. Also, they showed that a
\textbf{quantifier-free} formula has SOP$_4$ (even SOP$_n$ for
$n\geq 4$). Using the Shelah's result, this implies that for many
cardinals there is not a universal model of Banach spaces. Note
that since such the formula is \textbf{quantifier-free}, every
\textbf{subspace} is an embedding, so universal model does not
exist in the sense of Banach theorists. Of course, $T_B$ has SOP
using a formula with a quantifier. Indeed, consider the formula
$\phi(x,y)=\max(\|x+y\|,\|x-y\|)$. Let $s_n=e_1+\ldots+e_n$ where
$(e_n)$ is the standard basis of $c_0$. Now
$\phi(e_k,s_n)+\frac{1}{2}<\phi(e_k,e_m)$ for all $n<k\leq
m<\omega$ AND $\phi(x,e_n)\leq\phi(x,e_m)$ for all $x\in c_0$ and
$n\leq m$. Let $\psi(x_1,x_2):=\forall
x(\phi(x,x_1)\to\phi(x,x_2))$. Then $\psi(x,y)$ define a partial
order with an infinite chain in the monster model of Banach
spaces. (Recall that an \textbf{incomplete} theory has SOP if a
complete extension of it has SOP. In this case,  the
Kojman--Shelah result holds still.)

On the other hand, in \cite{FHV} it is showen that the class of
C$^*$-algebras has SOP with a quantifier-free formula. (Note that
its theory is incomplete.) So, using Kojma--Shelah, this implies
non-existence of universal models in many cardinals. Here we want
to show that the calls of Banach lattice has SOP$_4$ with a
quantifier-free formula. Indeed let $\phi(x,y)=\||x|+|y|\|$. Then
$\phi(e_k,s_n)+\frac{1}{2}<\phi(e_k,e_m)$ for all $n<k\leq
m<\omega$ AND $\phi(x,e_n)\leq\phi(x,e_m)$ for all $x\in c_0$ and
$n\leq m$ (where $\phi(e_k,s_n)+\frac{1}{2}<\phi(e_k,e_m)$ for
all $n<k\leq m<\omega$ AND $\phi(x,e_n)\leq\phi(x,e_m)$ for all
$x\in c_0$ and $n\leq m$). Let
$\psi(x_1x_2,y_1y_2):=(\|x_2+y_1\|\leq 1 \wedge \|x_1+y_2\|\geq
2)$. Now $\psi(e_is_i,e_{i+1}s_{i+1})$ holds for all $i<\omega$.
This means that there is an infinite chain. It is easy to check
that $\psi$ has SOP$_4$, using the triangle property of norm. 
Note that $\psi$ is \textbf{quantifier-free}.

Since the above formula $\psi$ is \textbf{quantifier-free} we
have:

\begin{cor} Suppose there exists a universal Banach lattice (under
isometry) in $\lambda=cf(\lambda)$. Then either
$\lambda=\lambda^{<\lambda}$ or $\lambda=\mu^+$ and $2^{<\mu}\leq
\lambda$.
\end{cor}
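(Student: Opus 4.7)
The plan is to invoke the Kojman--Shelah non-existence theorem for universal models of theories with SOP$_4$, applied to the theory $T_{BL}$ of Banach lattices together with the quantifier-free formula $\psi(x_1x_2,y_1y_2)$ constructed immediately before the corollary. Recall that Kojman--Shelah shows: if a (possibly incomplete) theory $T$ has a formula with SOP$_4$, and $\lambda$ is a regular cardinal for which neither $\lambda^{<\lambda}=\lambda$ nor ($\lambda=\mu^+$ with $2^{<\mu}\leq\lambda$) holds, then $T$ has no universal model in $\lambda$. Our corollary is precisely the contrapositive of this assertion instantiated at $T_{BL}$.

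The first step is to check that the hypotheses of Kojman--Shelah apply in our setting. The preceding paragraph exhibits a quantifier-free formula $\psi$ witnessing SOP$_4$ on the sequence $(e_is_i:i<\omega)$ in $c_0$, with the SOP$_4$ inequalities being verified by the triangle inequality for the lattice norm. Since $\psi$ is quantifier-free, its interpretation is preserved and reflected by any isometric embedding of Banach lattices; in particular the partial order defined by $\psi$ is preserved under substructure embeddings. This is the key point: it means that a universal object in the category of Banach lattices with isometric (lattice) embeddings is exactly a universal model in the model-theoretic sense for $T_{BL}$ with the quantifier-free type of $\psi$. Hence the combinatorial obstruction that Kojman--Shelah extract from SOP$_4$ applies verbatim.

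Next I would address the point that $T_{BL}$ is not complete and that we work in continuous rather than classical logic. Both issues are handled already in the paper: as noted parenthetically, an incomplete theory is defined to have SOP if some completion does, and in that case the Kojman--Shelah machinery still yields non-existence of universal models. For the continuous-logic aspect, since $\psi$ is quantifier-free and the relevant inequalities $\|x_2+y_1\|\leq 1$ and $\|x_1+y_2\|\geq 2$ are discrete closed conditions with a gap, the SOP$_4$-witness encodes a genuine $\{0,1\}$-valued order relation to which the classical Kojman--Shelah coding argument applies without modification.

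Assembling these ingredients, I would conclude as follows. Assume a universal Banach lattice under isometry exists in the regular cardinal $\lambda=\mathrm{cf}(\lambda)$. By the preservation property of the quantifier-free formula $\psi$, this yields a universal model of $T_{BL}$ in $\lambda$ in the model-theoretic sense. Since $\psi$ has SOP$_4$, Kojman--Shelah forces $\lambda$ to fall into one of the two permitted cases, namely $\lambda=\lambda^{<\lambda}$, or $\lambda=\mu^+$ with $2^{<\mu}\leq\lambda$. The main technical point to be careful about is verifying that the embedding notion used by Banach-lattice theorists (isometric lattice embedding) really coincides with the model-theoretic notion relative to which Kojman--Shelah is stated; the quantifier-freeness of $\psi$, together with the standard fact that isometric lattice maps preserve $|\cdot|$ and $\|\cdot\|$, is exactly what makes this identification work.
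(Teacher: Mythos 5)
Your proposal is correct and follows essentially the same route as the paper: the paper gives no explicit proof beyond the remark that $\psi$ is quantifier-free, the intended argument being exactly the one you spell out, namely that the quantifier-free SOP$_4$ witness $\psi(x_1x_2,y_1y_2)$ constructed in the preceding paragraph makes the Kojman--Shelah/Shelah SOP$_4$ non-universality theorem applicable, with quantifier-freeness ensuring that isometric lattice embeddings coincide with model-theoretic embeddings. Your additional care about the incompleteness of the theory and the $\{0,1\}$-valued nature of the conditions in $\psi$ matches the paper's parenthetical remarks.
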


 {\bf Acknowledgements.}
 I want to thank Alexander Usvyatsov for his comments and John T. Baldwin for his interest
in reading of a preliminary version of this article and for his
comments.


\vspace{10mm}


\end{document}